\theoremstyle{plain}
\newtheorem{thrm}{Theorem}[section]
\newtheorem{lemma}[thrm]{Lemma}
\newtheorem{prop}[thrm]{Proposition}
\begin{document}
% begin top matter
% ***************** macroes needed for this paper ************************
\newcommand{\sn}{\mathbb{S}^{n-1}}
\newcommand{\SL}{\mathcal L^{1,p}( D)}
\newcommand{\Lp}{L^p( Dega)}
\newcommand{\CO}{C^\infty_0( \Omega)}
\newcommand{\Rn}{\mathbb R^n}
\newcommand{\Rm}{\mathbb R^m}
\newcommand{\R}{\mathbb R}
\newcommand{\Om}{\Omega}
\newcommand{\Hn}{\mathbb H^n}
\newcommand{\A}{\alpha }
\newcommand{\B}{\beta}
\newcommand{\eps}{\ve}
\newcommand{\BVX}{BV_X(\Omega)}
\newcommand{\p}{\partial}
\newcommand{\IO}{\int_\Omega}
\newcommand{\bG}{\boldsymbol{G}}
\newcommand{\bg}{\mathfrak g}
\newcommand{\bz}{\mathfrak z}
\newcommand{\bv}{\mathfrak v}
\newcommand{\Bux}{\mbox{Box}}
\newcommand{\e}{\ve}
\newcommand{\X}{\mathcal X}
\newcommand{\Y}{\mathcal Y}
\newcommand{\la}{\lambda}
\newcommand{\vf}{\varphi}
\newcommand{\rhh}{|\nabla_H \rho|}
\newcommand{\Ba}{\mathcal{B}_\beta}
\newcommand{\Za}{Z_\beta}
\newcommand{\ra}{\rho_\beta}
\newcommand{\n}{\nabla}
\newcommand{\vt}{\vartheta}
\newcommand{\its}{\int_{\{y=0\}}}
\newcommand{\py}{\partial_y^a}

\numberwithin{equation}{section}

\newcommand{\RN} {\mathbb{R}^N}
\newcommand{\Sob}{S^{1,p}(\Omega)}
\newcommand{\Dxk}{\frac{\partial}{\partial x_k}}
\newcommand{\Co}{C^\infty_0(\Omega)}
\newcommand{\Je}{J_\ve}
\newcommand{\beq}{\begin{equation}}
\newcommand{\bea}[1]{\begin{array}{#1} }
	\newcommand{\eeq}{ \end{equation}}
\newcommand{\ea}{ \end{array}}
\newcommand{\eh}{\ve h}
\newcommand{\Dxi}{\frac{\partial}{\partial x_{i}}}
\newcommand{\Dyi}{\frac{\partial}{\partial y_{i}}}
\newcommand{\Dt}{\frac{\partial}{\partial t}}
\newcommand{\aBa}{(\alpha+1)B}
\newcommand{\GF}{\psi^{1+\frac{1}{2\alpha}}}
\newcommand{\GS}{\psi^{\frac12}}
\newcommand{\HFF}{\frac{\psi}{\rho}}
\newcommand{\HSS}{\frac{\psi}{\rho}}
\newcommand{\HFS}{\rho\psi^{\frac12-\frac{1}{2\alpha}}}
\newcommand{\HSF}{\frac{\psi^{\frac32+\frac{1}{2\alpha}}}{\rho}}
\newcommand{\AF}{\rho}
\newcommand{\AR}{\rho{\psi}^{\frac{1}{2}+\frac{1}{2\alpha}}}
\newcommand{\PF}{\alpha\frac{\psi}{|x|}}
\newcommand{\PS}{\alpha\frac{\psi}{\rho}}
\newcommand{\ds}{\displaystyle}
\newcommand{\Zt}{{\mathcal Z}^{t}}
\newcommand{\XPSI}{2\alpha\psi \begin{pmatrix} \frac{x}{\left< x \right>^2}\\ 0 \end{pmatrix} - 2\alpha\frac{{\psi}^2}{\rho^2}\begin{pmatrix} x \\ (\alpha +1)|x|^{-\alpha}y \end{pmatrix}}
\newcommand{\Z}{ \begin{pmatrix} x \\ (\alpha + 1)|x|^{-\alpha}y \end{pmatrix} }
\newcommand{\ZZ}{ \begin{pmatrix} xx^{t} & (\alpha + 1)|x|^{-\alpha}x y^{t}\\
	(\alpha + 1)|x|^{-\alpha}x^{t} y &   (\alpha + 1)^2  |x|^{-2\alpha}yy^{t}\end{pmatrix}}
\newcommand{\norm}[1]{\lVert#1 \rVert}
\newcommand{\ve}{\varepsilon}
\newcommand{\D}{\operatorname{div}}
\newcommand{\G}{\mathscr{G}}
\newcommand{\W}{\tilde{W}}

\title[carleman estimates etc ]{Carleman estimates for sub-Laplacians on Carnot groups}

\author{Vedansh Arya}
\address{Tata Institute of Fundamental Research\\
Centre For Applicable Mathematics \\ Bangalore-560065, India}\email[Vedansh Arya]{vedansh@tifrbng.res.in}

\author{Dharmendra Kumar}
\address{Tata Institute of Fundamental Research\\
Centre For Applicable Mathematics \\ Bangalore-560065, India}\email[Dharmendra Kumar]{dharmendra2020@tifrbng.res.in}

%\thanks{Second author is supported in part by SERB Matrix grant MTR/2018/000267 and by Department of Atomic Energy,  Government of India, under
%project no.  12-R \& D-TFR-5.01-0520.}

%\dedicatory{Dedicated to Giovanni Alessandrini, on his $60$-th birthday, with great affection and admiration}

%
% 
% AMS information
%
\keywords{}
\subjclass{35H20, 35A23, 35B60}

\maketitle
% end top matter
\begin{abstract}
	In this note, we  establish a new Carleman estimate  with singular weights  for the sub-Laplacian  on a  Carnot group $\mathbb G$  for  functions satisfying  the discrepancy assumption in \eqref{disc} below. We use such an estimate to derive a sharp vanishing order estimate   for solutions to stationary Schr\"odinger equations. 
	
\end{abstract}

\maketitle
% end top matter
%\tableofcontents

\section{Introduction and Statement of the main result}
In this note, we give an elementary proof of  a $L^{2}-L^{2}$ type Carleman estimate with singular weights for the sub-Laplacian on Carnot groups. Using such an estimate, we  present a new  application to an upper bound on the maximal order of vanishing for solutions to stationary Schr\"odinger equations \eqref{e0}.  Such a result   as in Theorem \ref{main} below  constitutes a quantitative version of the  strong unique continuation  property and   can be thought of as a subelliptic  generalization of a similar  quantitative uniqueness  result due to  Bourgain and Kenig in \cite{BK}.   .

Concerning the question of interest in this note, the unique continuation property, we mention that for general uniformly elliptic equations there are essentially two known methods  for proving it. The former  is  based on Carleman inequalities, which are appropriate weighted versions of  Sobolev-Poincar\'e inequalities. This method  was first introduced by T. Carleman in his fundamental work \cite{C} in which  he  showed that  strong unique continuation holds  for equations of the type  $-\Delta u +V u = 0$, with $V \in L^{\infty}_{loc}(\R^2)$.
Subsequently, his estimates were generalised  in \cite{A} and \cite{AKS} to uniformly elliptic operators with $C^{2, \alpha}_{loc}$ and $C^{0,1}_{loc}$ principal part respectively in all dimensions. 
We recall that unique continuation fails in general when the coefficients of the principal part are only H\"older continuous, see  \cite{Pl}.
The second approach came up in the works of Lin and Garofalo, see \cite{GL1}, \cite{GL2}. Their method is based on the almost monotonicity of a generalisation of the frequency function, first introduced by Almgren in \cite{Al} for harmonic functions. Using this approach, they were able to obtain new quantitative  information  for the solutions to  divergence form elliptic equations with Lipschitz coefficients which in particular encompass and improve on those in \cite{AKS}. 

The unique continuation in subelliptic setting of a Carnot group is however much subtler  in the sense that  strong unique continuation property is in general not true for solutions to \eqref{e0}. This follows from some interesting work of Bahouri (\cite{Bah}) where the author showed that unique continuation is not true for  even smooth and  compactly supported perturbations of the sub-Laplacian. Subsequently in the setting of the Heseinberg group $\mathbb H^n$,  it is shown by Garofalo and Lanconelli in \cite{GLa} that if the solutions to \eqref{e0} additionally satisfy  the discrepancy assumption of the type \eqref{disc}, then the strong unique continuation holds. Such a result has been generalized to Carnot groups of arbitrary step in \cite{GR}. We also refer to the
 recent work \cite{G} where it is shown that in general, the Almgren type monotonicity fails even when $\mathbb G =\mathbb H^n$.  It is to be noted that the discrepancy condition \eqref{disc} trivially holds in the Euclidean case. See Section \ref{pr} below.

The purpose  of this note is to establish a new Carleman estimate  in  the framework of \cite{GR} where the strong unique continuation  is known so far using which  we  prove the vanishing order estimate in Theorem \ref{main} below.  We now state our main results.
\subsection{Statement of the main results}
Our first result is the subelliptic analogue of the well known Carleman estimate  in \cite{BK}.  See also \cite{EV}, \cite{Bk}. We refer to Section \ref{pr} for relevant notations and notions. 
\begin{thrm} \label{cest}
	Let   $w \in C^{2}_{0}(B_R \setminus \{e\})$ satisfy the discrepancy assumption in \eqref{disc} below for some $\delta>0$. Also assume that $V$ satisfies \eqref{vass}. Then there exists a universal constant $R_0>0$  depending on $\delta,$ $C_E$ and $Q$ such that  for all $R \leq R_0$  and $\A > CK^{2/3} +Q$, the following estimate holds 
	\begin{align}\label{est}
		&\alpha^3\int \rho^{-2\alpha- 4+\ve} w^2 e^{2\alpha \rho^{\ve}} \psi dg \leq  C \int \rho^{-2\alpha} e^{2\alpha \rho^{\ve}} (\Delta_{H} w+Vw)^2 \psi^{-1} dg,
	\end{align}
	for some universal $C$ and  $\ve= \delta/2.$
\end{thrm}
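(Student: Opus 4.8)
The plan is to prove the Carleman estimate by conjugation: substitute $w = \rho^{\alpha} e^{-\alpha\rho^{\ve}} v$ (equivalently, set $v = \rho^{-\alpha} e^{\alpha\rho^{\ve}} w$) and reduce the weighted inequality \eqref{est} to an unweighted spectral-type bound for the conjugated operator $L_{\alpha} v := \rho^{\alpha} e^{-\alpha\rho^{\ve}}\big(\Delta_H + V\big)\big(\rho^{-\alpha}e^{\alpha\rho^{\ve}} v\big)$. The key point, exactly as in the Euclidean scheme of Bourgain--Kenig, is to split $L_\alpha$ into a ``symmetric'' part $S_\alpha$ and an ``antisymmetric'' part $A_\alpha$ with respect to the measure $\psi\, dg$, write $\int |L_\alpha v|^2 \psi\, dg = \int|S_\alpha v|^2\psi\,dg + \int|A_\alpha v|^2\psi\,dg + \int ([S_\alpha,A_\alpha]v)\, v\, \psi\, dg$, and show that the commutator term is bounded below by $c\,\alpha^3\int \rho^{-4+\ve} v^2 \psi\, dg$ after translating back to $w$. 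This is where the gauge pseudo-distance $\rho$, the angle function $\psi = |\nabla_H \rho|^2$ (or whatever $\psi$ denotes in Section \ref{pr}), and especially the discrepancy assumption \eqref{disc} enter: the identities for $\Delta_H \rho$, $|\nabla_H \rho|$, and the radial vector field $Z$ on the Carnot group are only ``as good as'' in the Euclidean case modulo an error controlled by the discrepancy parameter $\delta$, so one works with $\ve = \delta/2$ precisely so that these error terms can be absorbed.

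Concretely, the steps I would carry out are: (1) Recall from Section \ref{pr} the relevant algebraic identities on $\mathbb G$ — the action of the generator $Z$ of anisotropic dilations on $\rho$, the Euler-type identity $Z\rho = \rho$, the formula for $\Delta_H \rho$ in terms of $\rho$, $\psi$ and the homogeneous dimension $Q$, and the commutator relations among $Z$, $\Delta_H$ and multiplication by powers of $\rho$. (2) Perform the substitution and compute $L_\alpha$ explicitly, organizing the first-order terms so that the ``large parameter'' $\alpha$ multiplies the vector field $Z$ (the natural conjugate direction for the radial exponential weight); the new feature versus a pure power weight is the extra factor $e^{\pm\alpha\rho^\ve}$, which produces terms of size $\alpha\ve\rho^{\ve}$, hence the book-keeping in powers of $\rho^{\ve}$. (3) Identify $S_\alpha$ and $A_\alpha$ relative to $\psi\, dg$, being careful that $\Delta_H$ is symmetric with respect to $dg$ but the weight $\psi^{\pm 1}$ in \eqref{est} forces the correct measure; integration by parts will generate boundary-free terms since $w \in C^2_0(B_R\setminus\{e\})$. (4) Compute the commutator $[S_\alpha, A_\alpha]$ and extract the leading positive term $\sim \alpha^3 \rho^{-4+\ve}\psi$; (5) bound all the error terms — those coming from the discrepancy, from $V$ (using \eqref{vass} and the hypothesis $\alpha > CK^{2/3}+Q$, the $K^{2/3}$ reflecting the standard $\|V\|_\infty^{2/3}$ scaling), and from lower-order curvature-type terms — by a small multiple of either the main term or $\int|S_\alpha v|^2$, using that $R \le R_0$ is small, i.e.\ $\rho^{\ve}$ is small on the support.

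The main obstacle, as in all such arguments, is step (4)–(5): showing that the commutator's leading term genuinely dominates and is \emph{positive}, and that the cross terms involving the discrepancy do not destroy this positivity. In the Euclidean case the radial geometry is exact and the commutator computation is classical; on a Carnot group the quantities $\Delta_H \rho$ and $|\nabla_H \rho|^2$ are not as rigid, and the only thing that saves the argument is that \eqref{disc} says the ``discrepancy'' $|\mathcal{Z} \log\psi|$ or the analogous quantity is $O(\rho^{\delta})$ (or bounded by $\delta$), which is exactly why one chooses $\ve = \delta/2 < \delta$: the borrowed powers of $\rho^{\ve}$ beat the discrepancy error. I would therefore spend most of the effort making the commutator identity a clean algebraic statement modulo a remainder $\mathcal{R}$, then proving the pointwise/integrated bound $|\mathcal{R}| \le C(\delta) \rho^{\delta-2}\,\psi$ plus $\alpha$-lower-order terms, so that for $\alpha$ large and $R_0$ small everything is absorbed. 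A secondary technical point is handling the singularity at the identity $e$: since $w$ vanishes near $e$ this is not a real obstruction, but one must check the weighted integrals converge, which they do because the exponent $-2\alpha-4+\ve$ is compensated by $w^2$ vanishing to high order — in fact the estimate is used to \emph{derive} such vanishing, so no circularity arises as long as one works with compactly supported test functions first and approximates.
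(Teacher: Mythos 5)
Your high-level plan — conjugate by the weight, isolate a dominant ``radial'' term, and absorb errors using $\ve=\delta/2$ and smallness of $R_0$ — is the right instinct, but you propose the abstract symmetric/antisymmetric commutator decomposition, whereas the paper proceeds much more concretely. It writes $w=\rho^{\beta}e^{-\alpha\rho^{\ve}}v$ with the \emph{shifted} exponent $\beta$ fixed by $2\beta-2\alpha-4+Q=0$ (so that one of the cross terms vanishes identically), expands $\Delta_H w+Vw$ via the radial formula for $\Delta_H$, and then applies the elementary inequality $(a+b)^2\ge a^2+2ab$ with the specific choice $a=2\beta\rho^{\beta-2}e^{-\alpha\rho^{\ve}}\psi\,Zv$. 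The resulting cross-term $2ab$ is handled term by term using the divergence identity $\operatorname{div}_{\mathbb G}(\rho^{-l}Z)=(Q-l)\rho^{-l}$, the commutation relation $[X_i,Z]=X_i$, Young's inequality, and — crucially — a Rellich-type identity applied to the vector field $\rho^{-Q+2}Z$, which you do not mention. That Rellich identity is indispensable: it is exactly what turns the most dangerous term $\int\rho^{-Q+2}Zv\,\Delta_H v$ first into $\int\rho^{-Q+1}\langle\nabla_H v,\nabla_H\rho\rangle Zv$, and then, using the definition of the discrepancy, into the manifestly nonnegative $\int\rho^{-Q}\psi(Zv)^2$ plus a discrepancy error. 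Without a substitute for this step your ``commutator'' computation has no way to close. The abstract $S_\alpha/A_\alpha$ split is also technically problematic here because of the measure mismatch you flag but do not resolve: $\Delta_H$ is symmetric in $dg$ while the estimate lives in $\psi\,dg$, and $Z$ is not antisymmetric in $\psi\,dg$ (the divergence identity above produces a zeroth-order correction $(Q-l)\rho^{-l}$). The paper simply never invokes the decomposition, sidestepping the issue.

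There is also a genuine conceptual gap in how you treat the discrepancy. You describe it as a geometric quantity of the group (``the identities for $\Delta_H\rho$, $|\nabla_H\rho|$ ... are only `as good as' in the Euclidean case modulo an error controlled by the discrepancy''; ``$|\mathcal Z\log\psi|$ or the analogous quantity''). It is not: \eqref{disc} is a hypothesis on the \emph{function}. The discrepancy is $E_u=\langle\nabla_H u,\nabla_H\rho\rangle-\frac{Zu}{\rho}\psi$, and the assumption is $|E_u|\le C_E\rho^{\delta-1}|u|\psi$. It vanishes identically for radial functions on \emph{any} Carnot group, which is what yields the transformation law $E_v=\rho^{-\beta}e^{\alpha\rho^{\ve}}E_w$; this, combined with the hypothesis on $w$, is precisely how the paper bounds the two discrepancy cross-terms $\int\rho^{-Q+1}E_v\,Zv$ (one from the first-order part of $\Delta_H w$, one from the Rellich step) by $C_E\int\rho^{-Q+\delta}\psi(|v|^2+(Zv)^2)$, which is then absorbed since $\ve=\delta/2<\delta$ and $R_0$ is small. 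Without correctly identifying the discrepancy as a property of $w$ you cannot write this transformation law, and your absorption in step (5) cannot be made to work as stated.
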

Using the Carleman estimate in Theorem \ref{cest} above, we derive the following quantitative uniqueness result for solutions to 
\begin{equation}\label{e0}
		- \Delta_{H} u =V u \;\;\text{in $B_R$},
	\end{equation}
where $V$ satisfies the following growth condition
	\begin{equation}\label{vass}
		|V| \leq K \psi.
	\end{equation}
	Since the regularity issues are not our main concern, we will assume  apriori that  $u, X_{i}u, X_{i}X_{j} u, Zu$   are in $L^{2}(B_R)$ with respect to the Haar measure $dg$. 
	
\begin{thrm}\label{main}
	Let $u$ be a solution to \eqref{e0} where $V$ satisfies \eqref{vass}. Futhermore assume that $u$ satisfies the discrepancy assumption in \eqref{disc} below.	
Then there exists a constant $C = C(Q,C_E, \delta)>0$ such that for all $r<R_0/8$, we have
	\begin{align}
		|| u \psi^{1/2}||_{L^2(B_r)} > Cr^{A},
	\end{align}
	where $A=CK^{2/3}+C+C\left(\left(1 + || u \psi^{1/2}||_{L^2(B_{R_0})}\right)\Big/||  u \psi^{1/2}||_{L^2(B_{R_0/4})}\right)^{4/3}$ and $R_0$ is as in the Theorem \ref{cest}.
\end{thrm}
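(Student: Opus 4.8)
The plan is to derive Theorem \ref{main} from the Carleman estimate of Theorem \ref{cest} via the standard Carleman-to-vanishing-order argument, following the scheme of Bourgain--Kenig \cite{BK} adapted to the Carnot group setting. First I would fix a solution $u$ of \eqref{e0} satisfying the discrepancy assumption \eqref{disc} and normalize so that things are scaled to $B_{R_0}$. Choose two cutoff functions: one localizing near the origin at scale $r$ and one away from scale $R_0/4$, and set $w = \eta u$ where $\eta \in C^\infty_0(B_{R_0/2} \setminus \{e\})$ with $\eta \equiv 1$ on the annulus $B_{R_0/4}\setminus B_{2r}$. The key point is that $w$ inherits the discrepancy assumption on the region where $\eta$ is constant (one must check this is compatible with the hypothesis of Theorem \ref{cest}, possibly after a further cutoff or approximation argument), and $\Delta_H w + Vw$ is supported in the two thin regions $\{r \le \rho \le 2r\}$ and $\{R_0/4 \le \rho \le R_0/2\}$, where it is controlled by $u$ and its horizontal derivatives via the product rule and interior Caccioppoli/$L^2$ estimates.

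Next I would plug this $w$ into \eqref{est}. On the left-hand side, restricting the integral to $B_r$ (where $\eta \equiv 1$, assuming $2r$ is the inner cutoff — or more carefully to $B_{3r/2}\setminus B_r$) and using that $\rho^{-2\alpha - 4 + \ve} e^{2\alpha\rho^\ve} \gtrsim r^{-2\alpha}\cdot(\text{const})$ there, one gets a lower bound of the form $\alpha^3 r^{-2\alpha} \|u\psi^{1/2}\|_{L^2(\text{inner region})}^2$ (up to harmless powers of $r$). On the right-hand side, split the integral over the inner region $\{r \le \rho \le 2r\}$ and the outer region $\{R_0/4 \le \rho \le R_0/2\}$. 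On the outer region the weight $\rho^{-2\alpha}e^{2\alpha\rho^\ve}$ is bounded by $(R_0/4)^{-2\alpha} e^{2\alpha (R_0/2)^\ve}$, giving a term like $C(R_0)^{-2\alpha} e^{C\alpha}\|u\psi^{1/2}\|^2_{L^2(B_{R_0})}$; on the inner region the weight is $\lesssim r^{-2\alpha}$ times a bounded exponential. The inner-region right-hand contribution must be absorbed into the left-hand side: this is where one uses $\alpha^3$ beating the constant $C$ for $\alpha$ large, i.e. choosing $\alpha > CK^{2/3} + Q$ large enough (and exploiting the gap between the powers of $\rho$, the $\psi^{\pm1}$ mismatch being handled by \eqref{vass}). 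The inner Caccioppoli estimate converts $\|\Delta_H w + Vw\|$ on $\{r\le\rho\le 2r\}$ into $\lesssim r^{-2}\|u\psi^{1/2}\|_{L^2(B_{2r})} + K\|u\psi^{1/2}\|_{L^2(B_{2r})}$, which is where the $K$-dependence of the exponent gets its scale.

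After absorption, the surviving inequality reads, schematically,
\[
r^{-2\alpha}\|u\psi^{1/2}\|_{L^2(B_r)}^2 \le C R_0^{-2\alpha} e^{C\alpha}\left(1 + \|u\psi^{1/2}\|_{L^2(B_{R_0})}\right)^2,
\]
possibly with $\|u\psi^{1/2}\|_{L^2(B_{R_0/2})}$ on the inner side replaced using the doubling-type lower bound $\|u\psi^{1/2}\|_{L^2(B_{R_0/4})}$ coming from the (qualitative) strong unique continuation of \cite{GR} — this is what forces the ratio $(1 + \|u\psi^{1/2}\|_{L^2(B_{R_0})})/\|u\psi^{1/2}\|_{L^2(B_{R_0/4})}$ to appear. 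Taking square roots, rearranging, and then optimizing the free parameter $\alpha$ (choosing $\alpha \sim CK^{2/3} + C + C(\text{ratio})^{4/3}$, exactly the form of $A$ in the statement, where the $4/3$ and $2/3$ exponents come from balancing $e^{C\alpha}$ against $(R_0/r)^{2\alpha}$ and from the $K^{2/3}$ threshold in Theorem \ref{cest}) yields $\|u\psi^{1/2}\|_{L^2(B_r)} \ge C r^A$ for all $r < R_0/8$.

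The main obstacle I anticipate is the bookkeeping in the absorption step: one has to make sure that the inner-region error term, which carries the same dominant weight $r^{-2\alpha}$ as the main left-hand term, is genuinely smaller — this requires the polynomial gain in $\rho$ (the "$-4+\ve$" in the exponent on the left versus "$0$" on the right) together with the $\alpha^3$ prefactor to overcome a fixed constant $C$ and the factor $r^{-4}$ coming from Caccioppoli, all uniformly for $r$ small and $\alpha$ large but with $\alpha$ eventually tied to the data. A secondary technical point is verifying that the cutoff function $w = \eta u$ legitimately satisfies the discrepancy assumption \eqref{disc} required to invoke Theorem \ref{cest} (on the support of $\nabla\eta$ the discrepancy of $w$ is not that of $u$), which may require either restricting attention to the constancy region of $\eta$ in the left-hand integral only, or an approximation/patching argument; I would handle this by noting that $w$ agrees with $u$ on $B_{R_0/4}\setminus B_{2r}$ and that all the quantitative content is extracted from that annulus, while the cutoff regions only contribute controlled error terms to the right-hand side.
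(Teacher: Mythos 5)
Your overall scheme is the same as the paper's: apply Theorem \ref{cest} to a cutoff of $u$, use \eqref{e0} to reduce $\Delta_H w+Vw$ to cutoff-error terms supported in two thin annuli, control those via Caccioppoli (Lemma \ref{energy}), and then optimize in $\alpha$. So structurally you are on track. However, the point you relegate to a ``secondary technical point'' is in fact the one place where the argument would break without a specific idea, and your proposed workarounds would not close it.

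To invoke Theorem \ref{cest} you must exhibit $w\in C^2_0(B_R\setminus\{e\})$ satisfying \eqref{disc}; this is a hypothesis on the whole support of $w$, not just where the cutoff is constant, so ``restricting attention to the constancy region in the left-hand integral only'' does not help --- the Carleman inequality is derived for $w$ globally and the hypothesis must hold globally. A vague ``approximation/patching'' also does not obviously work, because on the annuli where $\nabla\eta\neq 0$ the discrepancy of $w=\eta u$ picks up the term $uE_\eta$, and for a generic $\eta$ one has no control on $E_\eta$. The paper resolves this cleanly by taking the cutoff $\phi$ to be \emph{radial}, i.e.\ $\phi(g)=f(\rho(g))$. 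A short computation (using $Z\rho=\rho$ and $\nabla_H\phi=f'(\rho)\nabla_H\rho$) gives $E_\phi\equiv 0$, and since $E_{u\phi}=\phi E_u+uE_\phi$, one gets $E_w=\phi E_u$; with $0\le\phi\le 1$ this immediately inherits \eqref{disc} with the same constant $C_E$. This observation is what makes the cutoff legitimate in the Carleman estimate and is the missing step in your write-up. (A smaller remark: the ratio $\bigl(1+\|u\psi^{1/2}\|_{R_0}\bigr)/\|u\psi^{1/2}\|_{R_0/4}$ in the exponent $A$ is not obtained from any doubling or qualitative SUC input; it simply drops out of rearranging the Carleman inequality and then choosing $\alpha$ proportional to that ratio to the power $4/3$, which is governed by the $\alpha^3$ prefactor in \eqref{est}. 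The paper just assumes $\|u\psi^{1/2}\|_{R_0/4}\neq 0$ WLOG.)
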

It is worth emphasizing that, when $h=1$, from \eqref{defsi} we have $\psi \equiv 1$. In this case the constant $K$ in \eqref{vass} can be taken to be  $||V||_{L^{\infty}}$, and therefore Theorem \ref{main} reduces to the cited Euclidean result in \cite{BK} which is sharp in view of Meshov's counterexample in \cite{Me}. We also note that when $V$ satisfies the additional hypothesis
\[
|ZV| \leq K\psi,
\]
then, using a variant of the frequency function approach, the following sharper estimate was established in \cite{B} for solutions to \eqref{e0},
\begin{equation}\label{m}
||u||_{L^{\infty}(B_r)} \geq C_1 \left(\frac{r}{R_0}\right)^{C_2 (\sqrt{K}+1)}.   
\end{equation}

The reader should note that  for Laplacian on a compact manifold the counterpart of \eqref{m} was first obtained using Carleman estimates by Bakri in \cite{Bk}. This generalises the sharp vanishing order estimate of Donnelly and Fefferman in \cite{DF1,DF2} for eigenfunctions of the Laplacian. We also mention that, for the standard Laplacian, the result of Bakri was subsequently obtained by Zhu \cite{Zhu1}, using a variant of the frequency function approach in \cite{GL1, GL2}. This was extended in \cite{BG} to more general elliptic equations with Lipschitz principal part where  the authors also established a certain boundary version of the vanishing order estimate.

We mention that the proof of our Carleman estimate  is based on  elementary arguments  using  integration by parts and  an appropriate  Rellich type identity   and  is inspired by the  recent work \cite{BGM}  where a similar Carleman estimate has been established for Baouendi-Grushin operators.  Our proof however additionally exploits the discrepancy condition in \eqref{disc} below in a very crucial way. The reader will see that proof of our  Carleman estimate is based on some non-trivial geometric facts in the subelliptic setting that beautifully combine.

The paper is organized as follows. In Section \ref{pr}, we introduce some basic notations and gather some known results that are relevant to our work. In Section \ref{mn}, we prove our Carleman estimate as well as the vanishing order estimate asserted  in Theorem \ref{main} above.
\\
\textbf{Acknowledgments}

 We would like to thank Agnid Banerjee for various helpful discussions and suggestions.

\section{Notations and Preliminaries}\label{pr}
In this section we introduce the relevant notation and gather some auxiliary results that will be
useful in the rest of the paper. We will follow the same notations as in \cite{GR} and \cite{B}. For detail, we refer the reader to the book  \cite{BLU}. We now recall that a Carnot group of step $h$ is a simply connected Lie group $\mathbb{G}$ whose Lie algebra $\mathfrak{g}$ admits a stratification $\mathfrak{g}= V_1 \oplus ... \oplus V_h$ which is $h$ nilpotent, i.e., $[V_1, V_i]=V_{i+1}$ for  $i =1, ..., h-1$ and  $[V_i, V_h]=0$ for $i =1, ..., h.$ We will denote an arbitrary element of $\mathbb{G}$ by $g$ and $e$ will denote the identity of the group $\mathbb{G}.$  We will assume that $\mathfrak{g}$ is equipped with an inner product $\langle \cdot, \cdot \rangle_{\mathfrak{g}}$ such that $V_i'$s are mutually orthogonal. 

By the assumptions on the Lie algebra $\mathfrak{g},$  any basis of \textit{horizontal layer} generates the whole $\mathfrak{g}.$ Let $ \{ e_1, ...,e_m \}$ be an orthonormal basis of the first layer $V_1$ of the Lie algebra. We then define the  corresponding left invariant smooth vector fields by
\begin{equation}
	X_{i}(g)=dL_{g}(e_i), \\\  i=1, ..., m
\end{equation}	where $L_{g}$ denotes the left-translation operator given by $L_{g}(g')=gg'$ and $dL_{g} $ denote its differential. Furthur, we assume that $\mathbb{G}$ is equipped with a left inavriant Riemannian metric with respect to which $\{X_1, ...., X_m \}$ is an orthonormal set of vector fields. The sub-Laplacian corresponding to the basis  $ \{ e_1, ...,e_m \}$  is given by the formula
\begin{equation}\label{Def}
	\Delta_{H} u = \sum_{i=1}^m X_{i}^2 u.
\end{equation}
We will denote the horizontal gradient of $u$ by 
\begin{equation}
	\nabla_{H}u= \sum_{i=1}^m X_i u X_i
\end{equation}	
 and we let
\begin{equation}
	|\nabla_{H}u|^2= \sum_{i=1}^m (X_i u)^2.
\end{equation}	

We now define the anisotropic dilations $	\delta_{\lambda}$ on $\mathbb{G}$ by
\begin{equation}\label{di}
	\delta_{\lambda}(g) = exp \circ \Delta_{\lambda} \circ exp^{-1} g,
\end{equation}
where the exponential mapping $exp: \mathfrak{g} \to \mathbb{G}$ defines an analytic diffeomorphism onto $\mathbb{G}$ and for $\xi={\xi}_1+{\xi}_2+...+{\xi}_h,$ where ${\xi}_i \in V_i,$ we define
	\begin{equation}
	\Delta_{\lambda} \xi= \lambda {\xi}_1 +..... \lambda^{h} {\xi}_h,
\end{equation}
where we have assigned the formal degree $j$ to the each element of the layer $V_j.$ We will denote the infinitesimal generator of the non-isotrophic dilations \eqref{di} by $Z,$ note that such smooth vector fields is characterized by the following property 
	\begin{equation}
	\frac{d}{dr} u(\delta_{r} g) = \frac{1}{r} Zu(\delta_{r} g).
\end{equation}
Hence, $u \in C^1({\mathbb{G}})$ is a homogeneous function of degree $k$ with respect to \eqref{di}, i.e., $u(\delta_{r} g)= r^{k} u(g)$ if and only if $$Zu= ku.$$	

	 We will denote the bi-invariant Haar measure on $\mathbb{G},$ which is obtained by lifting via the exponential map \emph{exp} the Lebesgue measure on $\mathfrak{g}$ by $dg.$  Let $m_i$ denotes the dimension of $V_i.$ We then have
\begin{equation}
	(d \circ \delta_{\lambda})(g) = \lambda^{Q}dg,
\end{equation}
where
$Q= \sum_{j=1}^{h} jm_j$ is referred as the homogeneous dimension of $\mathbb{G}.$ 

Let $\Gamma (g, g')= \Gamma (g', g)$ be the positive unique fundamental solution of $-\Delta_{H}$. Note that we have $\Gamma$ is left translation invariant, i.e., 
\begin{equation}
	\Gamma (g, g')= \tilde{\Gamma}(g^{-1} \circ g')
\end{equation}
for some $\tilde{\Gamma} \in C^{\infty}(\mathbb{G}\setminus {\{e\}}).$
For every $r>0$, we define 
\begin{equation}
	B_{r}:= \left\{ g \in \mathbb{G}\; | \;\Gamma (g, e) > \frac{1}{r^{Q-2}} \right\}.
\end{equation}
 In \cite{F1}, Folland has proved that $\tilde{\Gamma}(g)$ is homogeneous function of degree $2-Q$ with respect to the non-isotrophic dilations \eqref{di}. Therefore, if we define
\begin{equation}
	\rho(g)= \tilde{\Gamma}(g)^{\frac{-1}{Q-2}}
\end{equation}
then $\rho$ is homogeneous of degree 1. Hence $B_r$ can be equivalently defined as 
\begin{equation}
	B_{r}= \{ g:\rho(g) < r \}.
\end{equation}
 We now let 
\begin{align}\label{defsi}
	\psi \overset{def} = |\n_H \rho|^2.
\end{align}
Since $\rho$ is a homogeneous function of degree 1, $\n_H \rho$ is a homogeneous function of degree 0. Hence we have
\begin{align}\label{zc}
	Z \psi =0.
\end{align}
Like in \cite{GR}, we define the discrepancy $E_{u}$  at $e$ by
\begin{equation}\label{dis}
	E_{u}= < \nabla_{H} u, \nabla_{H} \rho> - \frac{Zu}{\rho} |\nabla_{H}  \rho|^2.
\end{equation}
We will assume that  for some $\delta \in (0,1)$
\begin{equation}\label{disc}
	|E_u| \leq \frac{C_E}{\rho^{1-\delta}} |u| |\nabla_H \rho|^2.
\end{equation}
	
We  now state the following proposition from \cite{GR} concerning the action of the sub-Laplacian on radial functions.  This  will be needed in the proof of Theorem \ref{cest}. 
\begin{prop}\label{Rad}
	Let $f:(0,\infty)\to \R$ be a $C^2$ function, and define $w(g) = f(\rho(g))$. Then, one has
	\[
	\Delta_H w = |\nabla_{H} \rho|^2 \left\{f''(\rho) + \frac{Q-1}{\rho} f'(\rho)\right\},\ \ \ \ \ \text{in}\ \; \mathbb{G}\setminus \{e\}.
	\]
\end{prop}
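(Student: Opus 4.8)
The plan is to reduce the statement to the single geometric identity
\[
\Delta_H \rho = \frac{Q-1}{\rho}\,|\nabla_H \rho|^2 \qquad \text{in } \mathbb{G}\setminus\{e\},
\]
and then to establish this identity by exploiting that $\rho$ is an explicit power of the $\Delta_H$-harmonic fundamental solution $\tilde\Gamma$.

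First I would apply the chain rule for the left-invariant horizontal vector fields. Since $w=f\circ\rho$ and $\rho\in C^\infty(\mathbb{G}\setminus\{e\})$, for each $i$ one has $X_i w = f'(\rho)\,X_i\rho$ and hence $X_i^2 w = f''(\rho)\,(X_i\rho)^2 + f'(\rho)\,X_i^2\rho$. Summing over $i=1,\dots,m$ and recalling $|\nabla_H\rho|^2=\sum_i(X_i\rho)^2$ and $\Delta_H=\sum_i X_i^2$, this gives
\[
\Delta_H w = f''(\rho)\,|\nabla_H\rho|^2 + f'(\rho)\,\Delta_H\rho .
\]
Comparing with the asserted formula, the proposition follows once the displayed identity for $\Delta_H\rho$ is proved.

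To prove that identity, recall that by definition $\rho=\tilde\Gamma^{\beta}$ with $\beta=-\tfrac{1}{Q-2}$, where $\tilde\Gamma\in C^\infty(\mathbb{G}\setminus\{e\})$ is positive and satisfies $\Delta_H\tilde\Gamma=0$ in $\mathbb{G}\setminus\{e\}$. Applying the same chain-rule computation as above to the function $s\mapsto s^{\beta}$ and using $\Delta_H\tilde\Gamma=0$ yields
\[
\Delta_H\rho = \beta(\beta-1)\,\tilde\Gamma^{\beta-2}\,|\nabla_H\tilde\Gamma|^2,
\qquad
|\nabla_H\rho|^2 = \beta^2\,\tilde\Gamma^{2\beta-2}\,|\nabla_H\tilde\Gamma|^2 .
\]
Dividing (at points where $\nabla_H\tilde\Gamma\neq 0$) and using $\tilde\Gamma^{-\beta}=\rho^{-1}$, we obtain $\Delta_H\rho=\tfrac{\beta-1}{\beta}\,\rho^{-1}\,|\nabla_H\rho|^2$; since $\tfrac{\beta-1}{\beta}=1-\tfrac1\beta=1+(Q-2)=Q-1$, the claimed identity holds there, and it extends to all of $\mathbb{G}\setminus\{e\}$ because every term in the chain rule is continuous and both sides vanish wherever $\nabla_H\tilde\Gamma=0$. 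This completes the proof.

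\textbf{Main obstacle.} There is no serious difficulty here; the only point requiring mild care is the justification of the pointwise chain rule and the handling of the locus $\{|\nabla_H\tilde\Gamma|=0\}$, which is harmless since all quantities are continuous on $\mathbb{G}\setminus\{e\}$. Alternatively one could deduce the same identity from Folland's result that $\tilde\Gamma$ is $\delta_\lambda$-homogeneous of degree $2-Q$ together with $\Delta_H\tilde\Gamma=0$, but the direct computation above is the cleanest route.
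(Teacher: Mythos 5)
Your proof is correct. The paper itself does not prove Proposition \ref{Rad}; it simply cites \cite{GR} (where it appears as a lemma on the action of the sub-Laplacian on radial functions), so there is no in-paper argument to compare against. Your route is the standard one: the chain rule reduces the claim to the single identity $\Delta_H\rho=\frac{Q-1}{\rho}|\nabla_H\rho|^2$, which you obtain from $\rho=\tilde\Gamma^{-1/(Q-2)}$, the harmonicity $\Delta_H\tilde\Gamma=0$ off the pole, and a short arithmetic check, and this is precisely the computation underlying the cited result. Two small remarks. First, the ``division'' step is unnecessary and slightly fragile stylistically: it is cleaner to verify directly that
\[
\beta(\beta-1)\,\tilde\Gamma^{\beta-2}\,|\nabla_H\tilde\Gamma|^2 \;=\; \frac{Q-1}{\rho}\,\beta^2\,\tilde\Gamma^{2\beta-2}\,|\nabla_H\tilde\Gamma|^2,
\]
which, after substituting $\rho^{-1}=\tilde\Gamma^{-\beta}$, reduces to the scalar identity $\beta(\beta-1)=(Q-1)\beta^2$, true for $\beta=-\frac{1}{Q-2}$; this proves the identity everywhere at once, with no separate discussion of the characteristic set $\{\nabla_H\tilde\Gamma=0\}$. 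Second, for completeness one should record why $\rho$ is smooth and positive away from $e$ (namely $\tilde\Gamma\in C^\infty(\mathbb{G}\setminus\{e\})$ and $\tilde\Gamma>0$, as stated in Section \ref{pr}), which licenses the pointwise chain rule you apply. With those cosmetic adjustments, your argument is a complete and self-contained proof of the proposition.
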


We now collect the following elementary facts from \cite{DG} and \cite{GV}.

\begin{lemma}\label{propz}
	In a Carnot group $\mathbb{G},$ the infinitesimal generator of group dilations $Z$ enjoys the following properties:
	\begin{itemize}
		
		\item[(i)]
		%lemma 3.2 {GV}
		One has $[X_i,Z]=X_i, \quad i=1,...,m.$
		
		\item[(ii)]
		\emph{div}$_{\bG}(\rho^{-l} Z) = (Q - l)\rho^{-l}.$
	%lemma 3.2 {GV}
	%\emph{div}X_i=0 in page98 Daneilli and garofalo
	\end{itemize}
\end{lemma}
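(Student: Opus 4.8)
The plan is to obtain both identities from three structural facts recorded in Section~\ref{pr}: (a) the anisotropic dilations $\delta_\lambda$ are Lie group automorphisms of $\mathbb{G}$ whose differential on $\mathfrak g$ is the layer‑wise scaling $\Delta_\lambda$; (b) $Z$ is the infinitesimal generator of the one‑parameter group $t\mapsto\delta_{e^t}$, which is just the relation $\frac{d}{dr}u(\delta_r g)=\frac1r Zu(\delta_r g)$ read after the substitution $r=e^t$; and (c) a $C^1$ function is $\delta_\lambda$‑homogeneous of degree $k$ if and only if $Zu=ku$, together with the Jacobian relation $(d\circ\delta_\lambda)(g)=\lambda^Q\,dg$. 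All computations take place on $\mathbb{G}\setminus\{e\}$, where $\rho$ and its powers are smooth.

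\textbf{Proof of (i).} Since $\delta_\lambda$ is an automorphism, the pushforward of a left‑invariant vector field is again left invariant and corresponds, under $d\delta_\lambda=\Delta_\lambda$, to the image of its generator; as $e_i\in V_1$ we have $\Delta_\lambda e_i=\lambda e_i$, hence $(\delta_\lambda)_* X_i=\lambda X_i$. Writing $\Phi_t=\delta_{e^t}$ for the flow of $Z$ and using $\mathcal L_Z X_i=[Z,X_i]$,
\[
[X_i,Z]=-[Z,X_i]=-\frac{d}{dt}\Big|_{t=0}(\Phi_{-t})_* X_i=-\frac{d}{dt}\Big|_{t=0} e^{-t}X_i=X_i .
\]
(Alternatively one may argue in exponential coordinates, where $Z=\sum_k \sigma_k x_k\partial_{x_k}$ with $\sigma_k$ the layer of $x_k$ and $X_i=\partial_{x_i}+\sum_{\sigma_k\ge 2}p^i_k(x)\partial_{x_k}$, the $p^i_k$ being $\delta_\lambda$‑homogeneous of degree $\sigma_k-1$; the identity $[X_i,Z]=X_i$ is then a direct polynomial computation, but the coordinate‑free argument above is cleaner.)

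\textbf{Proof of (ii).} First, $\operatorname{div}_{\bG}Z=Q$: by (c) the pushforward of Haar measure satisfies $(\delta_\lambda)_*\,dg=\lambda^{-Q}\,dg$, so $\Phi_t^*\,dg=e^{Qt}\,dg$ and hence $\mathcal L_Z\,dg=Q\,dg$, which is exactly $\operatorname{div}_{\bG}Z=Q$. Applying the product rule for the divergence with respect to $dg$,
\[
\operatorname{div}_{\bG}(\rho^{-l}Z)=\rho^{-l}\operatorname{div}_{\bG}Z+Z(\rho^{-l})=Q\,\rho^{-l}+Z(\rho^{-l}).
\]
Since $\rho$ is $\delta_\lambda$‑homogeneous of degree $1$, $\rho^{-l}$ is homogeneous of degree $-l$, so by (c) $Z(\rho^{-l})=-l\,\rho^{-l}$. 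Combining, $\operatorname{div}_{\bG}(\rho^{-l}Z)=(Q-l)\rho^{-l}$.

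The calculations are short, and there is no serious obstacle: the only point needing care is bookkeeping the sign conventions when passing between pushforwards, pullbacks and Lie derivatives — equivalently, fixing $t\mapsto\delta_{e^t}$, rather than $t\mapsto\delta_{e^{-t}}$, as the flow of $Z$ — and remembering that $\operatorname{div}_{\bG}$ is computed against the bi‑invariant Haar measure $dg$, whose dilation Jacobian is $\lambda^Q$. No ingredient beyond those already stated in Section~\ref{pr} is required; the lemma merely repackages these standard Carnot‑group facts in the form used later in the Rellich‑type identity behind Theorem~\ref{cest}.
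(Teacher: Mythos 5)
Your argument is correct. Note, however, that the paper offers no proof of this lemma at all: it simply records the two identities as "elementary facts" imported from \cite{DG} and \cite{GV}, so you are supplying a proof the authors omit rather than paralleling one. Your route is the clean coordinate-free one: identify $\Phi_t=\delta_{e^t}$ as the flow of $Z$ (which is exactly the substitution $r=e^t$ in the paper's defining relation for $Z$), use that $\delta_\lambda$ is a group automorphism with differential $\Delta_\lambda$ to get $(\delta_\lambda)_*X_i=\lambda X_i$ and hence $[X_i,Z]=X_i$ via the Lie derivative, and use $\mathcal L_Z\,dg=Q\,dg$ (from the Jacobian relation $(d\circ\delta_\lambda)(g)=\lambda^Q dg$) together with $Z\rho=\rho$ and the Leibniz rule to get $\operatorname{div}_{\bG}(\rho^{-l}Z)=(Q-l)\rho^{-l}$ on $\mathbb G\setminus\{e\}$. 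The cited sources, and the more pedestrian verification you sketch parenthetically, instead work in exponential coordinates, where $Z=\sum_k\sigma_k x_k\partial_{x_k}$, $\operatorname{div}_{\bG}$ is the Euclidean divergence (Haar measure being Lebesgue measure in these coordinates), and both identities reduce to short polynomial computations using the homogeneity of the coefficients of $X_i$; that version requires the explicit form of the left-invariant fields but avoids the pushforward/pullback sign bookkeeping you rightly flag. The only ingredients you use that Section 2 states implicitly rather than explicitly are that $\delta_\lambda$ is an automorphism of $\mathbb G$ and that divergence is taken with respect to $dg$; both are standard (see \cite{BLU}) and consistent with how the lemma is applied in the Rellich identity, so there is no gap.
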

 We need the following Rellich type identity in the proof of Theorem \ref{cest}, which corresponds to Theorem 3.1 in \cite{GV}. This can be seen as the sub-elliptic analogue of Rellich type identity  in \cite{PW}.
 \begin{lemma}
 	For a $C^{1}$ vector  field $F$,  the following holds 
 	\begin{align}\label{re}
 	&   \int_{B_r} div_{\mathbb{G}} F |\nabla_{H} v|^2 
 		 -  2\sum_{i=1}^{m}\int_{B_r} X_i v [X_i, F]v   - 2 \int_{B_r} Fv \Delta_{H} v \\
 	&	= \int_{\partial B_r} |\nabla_{H} v|^2 < F, \nu> -2\sum_{i=1}^{m} \int_{\partial B_r}Fv X_iv < X_i, \nu>.\notag
 		\end{align}
 \end{lemma}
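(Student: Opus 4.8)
\textbf{Proof strategy for the Rellich identity \eqref{re}.}

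The plan is to start from the elementary pointwise divergence-form identity that underlies all Rellich–Pohozaev type computations, namely
\[
\D_{\mathbb{G}}\!\Bigl(|\nabla_H v|^2 F - 2\,(Fv)\,\nabla_H v\Bigr)
= \D_{\mathbb{G}}F\,|\nabla_H v|^2
+ \sum_{i=1}^m X_i\bigl(|X_i v|^2\bigr)\,\langle \text{(lower order)}\rangle
- 2\,(Fv)\,\Delta_H v + (\text{commutator terms}),
\]
and then integrate over $B_r$ and apply the divergence theorem. Concretely, first I would expand $\D_{\mathbb{G}}\bigl(|\nabla_H v|^2 F\bigr) = \D_{\mathbb{G}}F\,|\nabla_H v|^2 + F\bigl(|\nabla_H v|^2\bigr)$, writing $F(|\nabla_H v|^2) = 2\sum_i (X_i v)\,F(X_i v)$. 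Next, for the second vector field $\sum_i X_i\bigl((Fv)\,X_i v\bigr)$, I would expand using the Leibniz rule to get $\sum_i (X_i F v)(X_i v) + (Fv)\Delta_H v$. The key algebraic manipulation is to rewrite $X_i(Fv)$ in terms of $F(X_i v)$ by introducing the commutator: $X_i(Fv) = F(X_i v) + [X_i,F]v$. Substituting this in, the terms $\sum_i (X_i v)\,F(X_i v)$ cancel between the two pieces (up to the factor $2$), leaving exactly $\D_{\mathbb{G}}F\,|\nabla_H v|^2 - 2\sum_i (X_i v)[X_i,F]v - 2\,(Fv)\,\Delta_H v$ as the divergence of the combined vector field $|\nabla_H v|^2 F - 2(Fv)\nabla_H v$.

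Once the pointwise identity
\[
\D_{\mathbb{G}}\Bigl(|\nabla_H v|^2 F - 2(Fv)\nabla_H v\Bigr)
= \D_{\mathbb{G}}F\,|\nabla_H v|^2 - 2\sum_{i=1}^m (X_i v)[X_i,F]v - 2(Fv)\Delta_H v
\]
is in hand, I would integrate both sides over $B_r$ and invoke the divergence theorem on the Carnot group (with respect to the Haar measure $dg$). The left-hand side becomes the boundary integral $\int_{\partial B_r}\bigl\langle |\nabla_H v|^2 F - 2(Fv)\nabla_H v,\ \nu\bigr\rangle$; here one uses that the $\nabla_H v$ vector field pairs against $\nu$ as $\sum_i (X_i v)\langle X_i,\nu\rangle$, producing the term $-2\sum_i\int_{\partial B_r}(Fv)(X_i v)\langle X_i,\nu\rangle$. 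Rearranging the interior terms to one side and the boundary terms to the other yields precisely \eqref{re}.

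The only genuinely delicate point is the commutator bookkeeping in the step $X_i(Fv) = F(X_i v) + [X_i,F]v$: one must be careful that $F$ is a first-order operator (a vector field), so $[X_i,F]$ is again a vector field and $[X_i,F]v$ is well-defined for $v \in C^2$ (or in the assumed Sobolev class). There is no need to invoke the stratification-specific identities of Lemma \ref{propz} here — those enter later, when this identity is applied with the specific choice $F = \rho^{-l}Z$. The regularity hypotheses already assumed on $v$ (that $v, X_i v, X_i X_j v, Z v \in L^2$) together with the $C^1$ assumption on $F$ are exactly what is needed to justify the integration by parts and the application of the divergence theorem on $B_r$, whose boundary $\partial B_r = \{\rho = r\}$ is smooth away from $e$.
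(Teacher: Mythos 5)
Your proof is correct. A small remark first: the paper does not actually prove this lemma — it simply cites it as Theorem 3.1 of the Garofalo--Vassilev paper \cite{GV}, noting it is the sub-elliptic analogue of the Rellich identity in \cite{PW}. So there is no ``paper's own proof'' to compare against; what you have supplied is a self-contained derivation, which is valuable.

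Your argument is the standard one and it is sound. You integrate the pointwise divergence identity
\[
\D\Bigl(|\nabla_H v|^2\,F - 2(Fv)\,\nabla_H v\Bigr)
= (\D F)\,|\nabla_H v|^2 - 2\sum_{i=1}^m X_i v\,[X_i,F]v - 2(Fv)\,\Delta_H v,
\]
and the commutator trick $X_i(Fv) = F(X_i v) + [X_i,F]v$ is exactly the right way to produce the $[X_i,F]$ term and cancel the $\sum_i (X_i v)\,F(X_i v)$ pieces. One step you use implicitly and should make explicit: when you write $\D\bigl((Fv)\nabla_H v\bigr) = \sum_i X_i\bigl((Fv)\,X_i v\bigr)$, you are using that each left-invariant vector field $X_i$ is divergence-free with respect to the Haar measure $dg$. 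This holds because Carnot groups are nilpotent, hence unimodular, so $dg$ is bi-invariant and $\D X_i = 0$; but since this is the one structural fact from the group that the computation actually needs, it deserves a sentence. Your observation that Lemma \ref{propz} is not needed here, only later for the specific choice $F=\rho^{-Q+2}Z$, is correct, as is your remark about the regularity of $\partial B_r = \{\rho = r\}$ and of $v$; note that in the actual application within Theorem \ref{cest} the function $v$ has compact support in $B_R\setminus\{e\}$, so the boundary terms vanish and the domain-regularity subtlety does not even arise.
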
 	
We now state a Caccioppoli type energy inequality which will be used in the proof of Theorem \ref{main}. The  proof  of such an energy inequality is identical to that of  [Lemma 4.1, \cite{BGM}] and we  therefore skip the details.\begin{lemma}\label{energy}
	Let $u$   be a solution to \eqref{e0} with $V$ satisfying \eqref{vass}. Then, there exists a universal constant $C=C(Q)>0$ such that for any $0<a<1,$ we have  
	\begin{equation}\label{en}
		\int_{B_{(1-a)R} }|\nabla_{H}u|^2 \leq   \frac{C}{a^2R^2} \int_{B_R} ( 1+ K) u^2  \psi.
	\end{equation}
\end{lemma}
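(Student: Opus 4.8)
The plan is to run the standard Caccioppoli argument, the only subelliptic twist being that the cutoff must be built out of the gauge $\rho$ so that the weight $\psi=|\nabla_H\rho|^2$ of \eqref{defsi} appears automatically on the right-hand side. First I would fix a smooth $\chi\colon[0,\infty)\to[0,1]$ with $\chi\equiv1$ on $[0,1-a]$, $\chi\equiv0$ on $[1,\infty)$ and $|\chi'|\le C/a$, and set $\eta(g)=\chi(\rho(g)/R)$. Since $\eta\equiv1$ on $B_{(1-a)R}$ — in particular in a neighbourhood of the pole $e$, where $\rho$ need not be smooth — while $\rho\in C^\infty(\mathbb G\setminus\{e\})$, the function $\eta$ is smooth, equals $1$ on $B_{(1-a)R}$, vanishes outside $B_R$, and by the chain rule $\nabla_H\eta=\tfrac1R\,\chi'(\rho/R)\,\nabla_H\rho$ is supported in $B_R\setminus B_{(1-a)R}$ and satisfies $|\nabla_H\eta|^2\le \tfrac{C}{a^2R^2}\,\psi$.

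Next I would test the weak formulation of \eqref{e0} against $\varphi=\eta^2u$, which is admissible under the standing a priori regularity $u,X_iu,X_iX_ju,Zu\in L^2(B_R)$ and produces no boundary contribution because $\eta$ has compact support in $B_R$. Writing $\nabla_H(\eta^2u)=\eta^2\nabla_Hu+2\eta u\,\nabla_H\eta$ gives
\[
\int\eta^2|\nabla_Hu|^2+2\int\eta u\,\langle\nabla_H\eta,\nabla_Hu\rangle=\int V\eta^2u^2 .
\]
By Young's inequality $2|\eta u\,\langle\nabla_H\eta,\nabla_Hu\rangle|\le\tfrac12\eta^2|\nabla_Hu|^2+2u^2|\nabla_H\eta|^2$; absorbing the first term into the left-hand side and using $|V|\le K\psi$ from \eqref{vass} one gets
\[
\int\eta^2|\nabla_Hu|^2\le \frac{C}{a^2R^2}\int_{B_R}u^2\psi + 2K\int_{B_R}\psi\, u^2 .
\]
Finally, since $\eta\equiv1$ on $B_{(1-a)R}$, and since $a<1$ and $R$ is bounded in all our applications (where $R\le R_0$), the term $2K\int_{B_R}\psi u^2$ is dominated by $\tfrac{C}{a^2R^2}K\int_{B_R}\psi u^2$, so \eqref{en} follows. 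This is precisely the scheme of [Lemma~4.1, \cite{BGM}], which is why the details are omitted.

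The argument has no genuine obstacle; the two points deserving (routine) care are the justification that $\eta^2u$ may be used as a test function given only the a priori $L^2$ bounds assumed on $u$ and its horizontal/vertical derivatives, and the bookkeeping that makes the geometric weight $\psi$ — rather than a bare constant — appear on the right, which is forced by testing against a cutoff that is radial in the gauge $\rho$ together with the identity $\psi=|\nabla_H\rho|^2$.
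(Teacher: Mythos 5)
Your proof is the standard Caccioppoli argument and matches what the paper invokes through [Lemma~4.1, \cite{BGM}], including the essential subelliptic point that a cutoff radial in the gauge $\rho$ (hence constant near the pole $e$, so smooth) satisfies $|\nabla_H\eta|^2\le \tfrac{C}{a^2R^2}\psi$ by the chain rule, so that the geometric weight $\psi$ appears on the right-hand side. You also correctly flag the only slight imprecision in the stated bound: folding the potential term $K\int_{B_R}\psi u^2$ into $\tfrac{CK}{a^2R^2}\int_{B_R}\psi u^2$ requires $a^2R^2$ to be bounded above, which holds in the paper's applications since the lemma is invoked only for radii $\le R_0<1$.
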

%integration by parts holds because $div(X_i)=0$
\

\section{Proof of Theorem \ref{cest} and \ref{main}}\label{mn}

\begin{proof}[Proof of Theorem \ref{cest}]For $R<R_0,$ let $w \in C^{2}_0(B_R\setminus \{e\}),$ satisfy the assumption in Theorem \ref{cest}. We now take $$w=\rho^{\beta} e^{-\alpha \rho^{\ve}} v,$$ where $\ve$ and $\B$ will be chosen later depending on $\delta$ and $\A$ respectively. Then we have
	\begin{align}\label{l1}
		\Delta_{H} w\;\; = \;\;v \Delta_{H} (\rho^{\beta}  e^{-\alpha \rho^{\ve}}) \;+\;2 <\nabla_{H}(\rho^{\beta} e^{-\alpha \rho^{\ve}}),\nabla_{H} v> \;+\; \rho^{\beta} e^{-\alpha \rho^{\ve}} \Delta_{H}  v.
	\end{align}
	Now we use Proposition \ref{Rad} and recall $ |\n_H \rho|^2=\psi$ to obtain
	\begin{align}\label{l2}
		&\Delta_{H}(\rho^{\beta} \ e^{-\alpha \rho^{\ve}})
		= \left(\beta(\beta+Q-2)\rho^{\beta-2}+\alpha^2 \ve^2 \rho^{\beta+2\ve-2}- \alpha\ve\left(2\beta+\ve+ Q-2\right) \rho^{\beta+\ve-2}\right) e^{-\alpha \rho^{\ve}} \psi.  
	\end{align}
Also, it is easy to check that 
	\begin{align}\label{l3}
		2 <\nabla_{H}( \rho^{\beta} e^{-\alpha \rho^{\ve}}),\nabla_{H} v>\;\;=\;\; \left(2\beta \rho^{\beta-1} -2 \ve \alpha \rho^{\beta+\ve-1}\right)  <\nabla_{H}\rho, \nabla_{H}v> e^{-\alpha \rho^{\ve}} .
	\end{align}
	Now we use \eqref{l2} and \eqref{l3} in \eqref{l1} to get
	\begin{align}\label{l4}
		\Delta_{H}w&= v\;\left(\beta(\beta+Q-2)\rho^{\beta-2}+\alpha^2 \ve^2 \rho^{\beta+2\ve-2} - \alpha\ve\left(2\beta+\ve+ Q-2\right) \rho^{\beta+\ve-2}\right) e^{-\alpha \rho^{\ve}} \psi
		\\
		& \;\;\ \ +\;\; \left(2\beta \rho^{\beta-1} -2 \ve \alpha \rho^{\beta+\ve-1}\right)  <\nabla_{H}\rho, \nabla_{H}v> e^{-\alpha \rho^{\ve}}\;+\; \rho^{\beta} e^{-\alpha \rho^{\ve}} \Delta_{H}  v .
		\notag
	\end{align}
 From definition \eqref{dis} for $E_v$, it is easy to see that \eqref{l4} is same as
	\begin{align}\label{lap}
		\Delta_{H}w + Vw&= \left(\beta(\beta+Q-2)\rho^{\beta-2}+\alpha^2 \ve^2 \rho^{\beta+2\ve-2} - \alpha\ve\left(2\beta+\ve+ Q-2\right) \rho^{\beta+\ve-2}\right) e^{-\alpha \rho^{\ve}} \psi v
		\\
		& \;\; \ \ + \left(2\beta \rho^{\beta-1} -2 \ve \alpha \rho^{\beta+\ve-1}\right)  e^{-\alpha \rho^{\ve}} \left(\frac{Zv}{\rho}\psi \; + \; E_v \right)\;+\; \rho^{\beta} e^{-\alpha \rho^{\ve}} \Delta_{H}  v +  \rho^{\beta} e^{-\alpha \rho^{\ve}}V v.
		\notag
	\end{align}
We now use the inequality $(a+b)^2\geq a^2+2ab$, with $a= 2\beta \rho^{\beta-2}  e^{-\alpha \rho^{\ve}} \psi Zv$ and $b= \Delta_{H}w +Vw -a $, where the expression for $\Delta_H w+Vw$ is given by \eqref{lap}, to find
	\begin{align}\label{c0}
		\int \rho^{-2\alpha} e^{2\alpha \rho^{\ve}}( \Delta_{H}w + Vw)^2  \psi^{-1} &\geq 4 \beta^2 \int  \rho^{2\beta-2\alpha-4}\psi (Zv)^2  +  4 \beta^2 (\beta+Q-2) \int \rho^{2\beta-2\alpha-4} \psi vZv \\
		& \ \ \ +4\alpha^2 \beta \ve^2 \int  \rho^{2\beta-2\alpha-4+2\ve}\psi vZv
		 - 4 \alpha \beta \ve(2\beta + \ve + Q -2)\int \rho^{2\beta-2\alpha-4+\ve} \psi v Zv  \notag\\ 
		 &\ \ \ - 8 \alpha \beta \ve \int \rho^{2\beta-2\alpha-4+\ve} \psi (Zv)^2    + 8\B \int \left(\beta \rho^{2\beta - 2\alpha-3} - \ve \alpha \rho^{2\beta- 2\alpha-3 + \ve} \right) E_vZv\notag\\
		 &\ \ \  + 4 \beta \int \rho^{2\beta-2\alpha-2}  \Delta_{H} v \ Zv + 4\B\int \rho^{2\B-2\A-2} Vv Zv\notag\\
		 &= I_1+I_2+I_3+I_4+I_5+I_6+I_7+I_8.\notag
	\end{align}
We now estimate each of the integrals individually. In order to estimate the $I_2,$ $I_3$ and $I_4$, first note that \eqref{zc} and (ii) in Lemma \ref{propz} gives
 \begin{align}\label{div0}
\D (\rho^{-l}\psi v^2 Z) =\psi v^2 \D(\rho^{-l}Z) + \rho^{-l}Z(\psi v^2)= (Q-l)\rho^{-l} \psi v^2+\rho^{-l} \psi Z(v^2),	
\end{align}
Also, $\text{supp}(u) \subset (B_R \setminus {\{e\}})$ Hence, \eqref{div0} gives 
\begin{align}\label{div}
	\int \rho^{-l} \psi Z(v^2) = -(Q-l)\int\rho^{-l} \psi v^2.
\end{align}
Thus using $2vZv = Z(v^2)$ and \eqref{div}, $I_2$ becomes 
\begin{align}
	4 \beta^2 (\beta+Q-2) \int \rho^{2\beta-2\alpha-4} \psi vZv &=2 \beta^2 (\beta+Q-2) \int \rho^{2\beta-2\alpha-4} \psi Z(v^2)\\
	&= -2\beta^2 (\beta+Q-2)(Q+2\B-2\A-4) \int \rho^{2\beta-2\alpha-4} \psi v^2.\notag
\end{align}
 Observe that in order to equate $I_2$ to zero, we need the following relation between $\A$ and $\B$
 \begin{align}\label{ab}
 	2\B -2\A -4 +Q=0. 
 \end{align}
Hence \begin{align}\label{i2f}
	I_2=0.
\end{align}
Again using $2vZv = Z(v^2),$ \eqref{div} and \eqref{ab} we get 
\begin{align}\label{i34f}
	I_3 + I_4 = -4\A^2 \B \ve^3 \int \rho^{-Q+2\ve} \psi v^2 + 2 \A \B \ve^2 (2\beta + \ve + Q -2) \int \rho^{-Q+\ve} \psi v^2.
\end{align}
We now estimate the integral $I_6.$ First note that using the relation \eqref{ab} and $\rho^{\ve} \le R_0^{\ve}<1$, we find 
\begin{align}\label{i31}
|I_6| =	8\B \left|\int \left(\beta \rho^{2\beta - 2\alpha-3} - \ve \alpha \rho^{2\beta- 2\alpha-3 + \ve} \right) E_vZv\right| \le 8 \beta(\B+\ve \A) \int \rho^{-Q+1}|Zv| |E_v|.
\end{align}
	In order to simplify \eqref{i31} we make use of the assumption \eqref{disc} on discrepency. To do this we observe that $E_{f(\rho)}=0$ and which furthur implies 
	\begin{align}\label{aa1}
		E_v=\rho^{-\B}e^{\A \rho^{\ve}}E_w.
	\end{align}
 Consequently, using \eqref{aa1}, \eqref{disc} and recalling $v=\rho^{-\B}e^{\A \rho^{\ve}}w$,  we deduce from\eqref{i31} 
	\begin{align}\label{i32}
	|I_6| \le	8 \beta(\B+\ve \A) C_E \int \rho^{-Q+\delta}|Zv||v|\psi.  
	\end{align}
 From \eqref{ab}, it is easy to see that for $\A>Q-4,$ we have $2\B >\A.$ Also, we have $\ve <1.$ Therefore we get $8 \beta(\B+\ve \A) \le 24 \B^2.$ Subsequently, we apply Young's equality in \eqref{i32} to find
\begin{align*}
	|I_6| \le 24\B^2 C_E \int \rho^{-Q+\delta}|Zv||v|\psi \le 12\B^2 C_E \int \rho^{-Q+\delta}|Zv|^2\psi + 12\B^2 C_E \int \rho^{-Q+\delta}|v|^2\psi.
	\end{align*}
Thus, we obtain 
\begin{align}\label{i6f}
	I_6 \ge -12\B^2 C_E \int \rho^{-Q+\delta}|Zv|^2\psi - 12\B^2 C_E \int \rho^{-Q+\delta}|v|^2\psi.
\end{align}
	Next, we simplify $I_7.$ Note that from \eqref{ab}, we have 
	\begin{equation}\label{rt2}
	I_7=	4\beta  \int  \rho^{2\beta-2\alpha-2} Zv \  \Delta_{H} v  = 4\beta  \int \rho^{-Q+2} Zv \;\Delta_{H} v.
	\end{equation}
	We now apply the Rellich type identity \eqref{re} to the vector field $ F= \rho^{-Q + 2} Z$. Also, note that since $v$ is compactly supported in $(B_{R} \setminus \{e\}),$ the boundary terms become zero. Therefore, \eqref{rt2} becomes 
	\begin{align}\label{i71}
		4 \beta \int \rho^{-Q+2} Zv \; \Delta_{H} v =2 \beta  \int \operatorname{div} (\rho^{-Q+2}Z) |\nabla_{H} v|^2   -4 \beta \sum_{i=1}^{m} \int X_i v [X_i, \rho^{-Q+2} Z] v.  
	\end{align} 
To simplify integrals in right-hand side of \eqref{i71}, recall that from (ii) of Lemma \ref{propz} we have
 \begin{align}\label{p3}
 	\operatorname{div} (\rho^{-Q+2}Z) = 2 \rho^{-Q+2}
 \end{align}
and using (i) of Lemma \ref{propz}, it is easy to obtain
\begin{align}\label{p4}
	[X_i,\rho^{-Q+2}Z]v = \rho^{-Q+2}[X_i, Z]v+X_i(\rho^{-Q+2})Zv=\rho^{-Q+2}X_iv+(2-Q) \rho^{-Q+1}X_i\rho \;Zv.
\end{align}
Consequently, using \eqref{p3} and \eqref{p4} in \eqref{i71} we find
	\begin{align}\label{ok11}
		4 \beta \int \rho^{-Q+2} Zv \; \Delta_{H} v = 4\B   \int \rho^{-Q+2} |\nabla_{H} v|^2 -4 \B \int \rho^{-Q+2}|X_iv|^2 + 4\B(Q-2) \int \rho^{-Q+1}X_i\rho X_iv \; Zv.	
	\end{align}
Since $|\nabla_{H} v|^2 = \sum_{i=1}^{m} |X_iv|^2$ and $\{X_1,X_2,...,X_m\}$ is an orthonormal set, we can rewrite \eqref{ok11} as follows
\begin{align}\label{i73}
		4 \beta \int \rho^{-Q+2} Zv \; \Delta_{H} v = 4\B (Q-2) \int \rho^{-Q+1} \langle \n_H v, \; \n_H \rho \rangle Zv.
\end{align}	
Now, we use the definition \eqref{dis} for $E_v$ in \eqref{i73} to get
\begin{align}\label{i74}
	4 \beta \int \rho^{-Q+2} Zv \; \Delta_{H} v &=4\B (Q-2) \int \rho^{-Q+1}\left( E_v + \frac{Zv}{\rho}\psi \right)Zv\notag\\
	&=4\B (Q-2) \int \rho^{-Q+1} E_v Zv + 4\B (Q-2) \int \rho^{-Q}\psi(Zv)^2.
\end{align}  	
	 We now  use \eqref{aa1} and \eqref{disc} in first integral of right-hand side of \eqref{i74} to obtain
	\begin{align*}
			4 \beta \int \rho^{-Q+2} Zv \; \Delta_{H} v \ge 
			- 4\B (Q-2)C_E \int \rho^{-Q+\delta} |v| |Zv| \psi + 4\B (Q-2) \int \rho^{-Q}\psi(Zv)^2.
	\end{align*} 
Subsequently we apply Young's inequality to get 
\begin{align}\label{i75}
		&4 \beta \int \rho^{-Q+2} Zv \; \Delta_{H} v\\& \ge 
	- 2\B (Q-2)C_E \int \rho^{-Q+\delta} \psi |v|^2  - 2\B (Q-2)C_E \int \rho^{-Q+\delta}\psi(Zv)^2 + 4\B (Q-2) \int \rho^{-Q}\psi(Zv)^2.\notag
\end{align}
 We now choose $R_0$ small enough such that $C_E R_0^{\delta} \le 1$, consequently, $C_E\rho^{\delta} <1.$ Hence \eqref{i75} becomes 
 \begin{align}\label{i7f}
 		4 \beta \int \rho^{-Q+2} Zv \; \Delta_{H} v &\ge 
 	- 2\B (Q-2)C_E \int \rho^{-Q+\delta} \psi v^2  - 2\B (Q-2) \int \rho^{-Q}\psi(Zv)^2 + 4\B (Q-2) \int \rho^{-Q}\psi(Zv)^2\notag\\
&\ge 	- 2\B (Q-2)C_E \int \rho^{-Q+\delta} \psi v^2 +2\B (Q-2) \int \rho^{-Q}\psi(Zv)^2\notag\\
& \ge - 2\B (Q-2)C_E \int \rho^{-Q+\delta} \psi v^2,
 \end{align} 
where the last inequality is a consequence of the fact that $Q\ge2.$ \\ 
We now simplify $I_8.$ 	We use the assumption \eqref{vass} followed by Young's inequality ($2AB \le A^2 +B^2$) with $A=Kv$ and $B=\B Zv$ to get
\begin{align}
	|I_8| \le 4\B \int \rho^{2\B-2\A-2}|V||v||Zv| \le 4\B K \int \rho^{2\B-2\A-2}\psi|v||Zv| \le 2 K^2 \int \rho^{2\B-2\A-2}\psi v^2 +2\B^2 \int \rho^{2\B-2\A-2}\psi|Zv|^2. 
\end{align}
 Subsequently, we use the \eqref{ab} to find
 \begin{align}\label{i8f}
 	I_8 \ge -2 K^2 \int \rho^{-Q+2}\psi v^2 -2\B^2 \int \rho^{-Q+2}\psi|Zv|^2.
 \end{align}
Therefore using \eqref{i2f}, \eqref{i34f}, \eqref{i6f}, \eqref{i7f}, \eqref{i8f}, \eqref{ab} and \eqref{c0}, for $\A>Q$ and $R_0$ small enough we have we obtain
	\begin{align}\label{c00}
		&\int \rho^{-2\alpha} e^{2\alpha \rho^{\ve}}( \Delta_{H}w+Vw)^2  \psi^{-1}\\& \geq 4 \beta^2 \int  \rho^{-Q} |Zv|^2 \psi
		  -4\A^2 \B \ve^3 \int \rho^{-Q+2\ve} \psi v^2 + 2 \A \B \ve^2 (2\beta + \ve + Q -2) \int \rho^{-Q+\ve} \psi v^2\notag \\
		  & - 8 \alpha \beta \ve \int \rho^{-Q+\ve} (Zv)^2 \psi -12\B^2 C_E \int \rho^{-Q+\delta}|Zv|^2\psi - 12\B^2 C_E \int \rho^{-Q+\delta}|v|^2\psi \notag\\
		 & - 2\B (Q-2)C_E \int \rho^{-Q+\delta} \psi v^2 -2 K^2 \int \rho^{-Q+2}\psi v^2 -2\B^2 \int \rho^{-Q+2}\psi|Zv|^2.
		\notag
	\end{align}
Now we use $\ve <1,$ $2\A>\B,$ $2\B > \A,$ which are consequences of \eqref{ab} and $\A>Q$ respectively, and rearrange the terms in right-hand side of \eqref{c00} to get 
\begin{align}\label{f1}
	\int \rho^{-2\alpha} e^{2\alpha \rho^{\ve}}( \Delta_{H}w+Vw)^2  \psi^{-1} &\geq 4 \beta^2 \int  \rho^{-Q} (Zv)^2 \psi - 16 \beta^2 \int \rho^{-Q+\ve} (Zv)^2 \psi -12\B^2 C_E \int \rho^{-Q+\delta}|Zv|^2\psi\\
	&\;\;\; -2\B^2 \int \rho^{-Q+2}\psi|Zv|^2 +2\B^3 \ve^2\int \rho^{-Q+\ve} \psi v^2 -16\B^3\int \rho^{-Q+2\ve} \psi v^2 \notag\\
	&\;\;\; - 12\B^2 C_E \int \rho^{-Q+\delta}|v|^2\psi - 4\B^2 C_E \int \rho^{-Q+\delta} \psi v^2 -2 K^2 \int \rho^{-Q+2}\psi v^2\notag
	\end{align}
At this point we would like to make the crucial observation that $-16\B^2 C_E\int \rho^{-Q+\delta} \psi v^2$ can be absorbed in the term $2\B^3\ve^2\int \rho^{-Q+\ve} \psi v^2$ provided that $\ve < \delta$ and $R_0$ is chosen small enough. Thus we now choose $\ve =\frac{\delta}{2}$ and $R_0$ small enough such that 
\begin{align}
	(18+12C_E) R_0^{\delta/2} <1\; \text{and} \;(16+16C_E) R_0^{\delta/2} <\ve^2
\end{align}
therefore we find
\begin{align}\label{11}
&	4 \beta^2 \int  \rho^{-Q} (Zv)^2 \psi - (16+12C_E+2) \beta^2R_0^{\delta/2} \int \rho^{-Q+\ve} (Zv)^2 \psi \ge 3 \B^2 \int  \rho^{-Q} (Zv)^2 \psi 
\end{align}
and \begin{align}\label{22}
	2\B^3 \ve^2\int \rho^{-Q+\ve} \psi v^2 -(16+12C_E+4C_E)\B^3R_0^{\delta/2}\int \rho^{-Q+\ve} \psi v^2 \ge \B^3 \ve^2 \int \rho^{-Q+\ve} \psi v^2.
\end{align}
Hence using \eqref{11} and \eqref{22} in \eqref{f1}, we obtain
\begin{align}\label{f2}
		\int \rho^{-2\alpha} e^{2\alpha \rho^{\ve}}( \Delta_{H}w+Vw)^2  \psi^{-1} \geq 3 \beta^2 \int  \rho^{-Q} (Zv)^2 \psi +\B^3\ve^2\int \rho^{-Q+\ve} \psi v^2-2 K^2 \int \rho^{-Q+2}\psi v^2.
\end{align}
Subsequently, if we choose $$\A >\frac{2}{\ve^{2/3}}K^{2/3}+Q$$ then from \eqref{ab}, we get $\B^3\ve^2 \ge 8K^2$. Hence \eqref{f2} becomes
\begin{align}
	\int \rho^{-2\alpha} e^{2\alpha \rho^{\ve}}( \Delta_{H}w+Vw)^2  \psi^{-1} \ge \frac{\A^3\ve^2}{16}\int \rho^{-Q+\ve} \psi v^2.
\end{align}
We now substitute $v=\rho^{-\B}e^{\A\rho^{\ve}}$ and use \eqref{ab} to get the desired estimate \eqref{est}. This completes the proof of Theorem \ref{cest}.
	\end{proof}
\begin{proof}[Proof of Theorem \ref{main}]
	We adapt arguments  from \cite{Bk1, BGM}. For a given $R_1 < R_2$, $A_{R_1, R_2}$ will denote the annulus $B_{R_1} \setminus B_{R_2}$. Let $R_0$ be as in the Theorem \ref{cest} and let $ 0 < R_1<2R_1<R_2=R_0/4.$ Also, we take a radial function $\phi \in C_{0}^{\infty}(B_{2R_2}),$ i.e., $\phi(g) =f(\rho(g))$ for some $f,$ such that
	\begin{equation}\label{bd1}
		\begin{cases}
			\phi \equiv 0\ \text{if $\rho < R_1$ and $\rho > {2R_2}$}
			\\
			\phi \equiv 1\ \text{in $A_{2R_1, R_2}$}
		\end{cases}
	\end{equation}
 and  the following bounds hold, 
	\begin{equation}\label{bd}
		|\nabla_{H} \phi| \leq  \frac{C\psi^{1/2}}{R},\ \ \ \ \ \ \ \ | \Delta_{H}\phi| \leq \frac{C \psi}{R^2}.
	\end{equation}
Without loss of generality we will assume 
\begin{align}\label{asm2}
	||  u \psi^{1/2}||_{L^2(B_{R_2})} \not=0.
\end{align}	
Since $u$ satisfies $- \Delta_{H} u =V u,$ $w=u\phi$ satisfies
	\begin{align}\label{w11}
		 \Delta_{H} w + V w = u\Delta_{H} \phi +2 \langle \n_H \phi, \n_H u \rangle.
	\end{align}
 Since $\phi$ is radial, we have $E_{\phi}=0$ and consequently, we get $E_w=\phi E_u.$ Also since $0\le \phi \le 1,$ therefore $E_w$ satisfies \eqref{disc} and moreover  by a standard limiting argument via approximation with smooth functions, we can apply the Carleman estimate in \eqref{est} to $w$.  We thus  obtain
\begin{align}\label{w2}
	\alpha^3 \int  \rho^{-2\alpha - 4 + \ve} e^{2\alpha \rho^{\ve}} u^2 \phi^2 \psi \le C \int \rho^{-2\alpha} e^{2 \alpha \rho^{\ve}}(u\Delta_{H} \phi +2 \langle \n_H \phi, \n_H u \rangle)^2\psi^{-1}.
\end{align} 
 Now we use the inequality $(a+b)^2 \le 2a^2 + 2b^2$  and Cauchy-Schwarz inequality in the right-hand side of \eqref{w2} to obtain
	\begin{equation}\label{ty11}
		\alpha^3 \int  \rho^{-2\alpha - 4 + \ve} e^{2\alpha \rho^{\ve}} u^2 \phi^2 \psi \leq 2C\int \rho^{-2\alpha} e^{2 \alpha \rho^{\ve}} (u^2 (\Delta_{H} \phi)^2 \psi^{-1}+ |\nabla_{H}u|^2 |\nabla_{H} \phi|^2 \psi^{-1}).
	\end{equation}
 For convenience, we will denote $L^{2}$ norm of $f$ in $B_R$ and $A_{R_1, R_2}$ by $||f||_R$ and $||f||_{R_1, R_2}$ respectively. 
	Note that from \eqref{bd1}, the functions $\n_H\phi$ and $\Delta_{H}\phi$ are supported in $A_{R_1,2R_1} \cup A_{R_2, 2R_2}.$ Furthur using \eqref{bd1} and \eqref{bd} in \eqref{ty11}, there exists a universal constant $C$ such that
	\begin{align}\label{bd2}
		 \A^{3/2}|| \rho^{-\alpha -2+\ve/2} e^{\alpha \rho^{\ve}} u \psi^{1/2}||_{2R_1,R_2}&\leq C\left ( || \rho^{-\alpha -2} e^{\alpha \rho^{\ve}} u \psi^{1/2}||_{R_1,2R_1} +  || \rho^{-\alpha -2} e^{\alpha \rho^{\ve}} u \psi^{1/2}||_{R_2,2R_2} \right)
		\\
		&\;\;\; + C \left( R_1|| \rho^{-\alpha -2} e^{\alpha \rho^{\ve}} |\nabla_{H}u| ||_{R_1,2R_1} + R_2 || \rho^{-\alpha -2} e^{\alpha \rho^{\ve}} |\nabla_{H}u| ||_{R_2,2R_2} \right).
		\notag
	\end{align}
We observe that the functions $$r \rightarrow r^{-\alpha-2+\ve/2} e^{\alpha r^{\ve}},\;\;\;\;\; r \rightarrow r^{-\alpha-2} e^{\alpha r^{\ve}}$$ are decreasing in $(0,1),$ therefore \eqref{bd2} gives 
\begin{align}\label{bd31}
	\A^{3/2}R_2^{-\alpha -2+\ve/2} e^{\alpha R_2^{\ve}}||  u \psi^{1/2}||_{2R_1,R_2}&\leq C\left ( R_1^{-\alpha -2} e^{\alpha R_1^{\ve}}||  u \psi^{1/2}||_{R_1,2R_1} + R_2^{-\alpha -2} e^{\alpha R_2^{\ve}} ||  u \psi^{1/2}||_{R_2,2R_2} \right)
	\\
	&\;\;\; + C \left( R_1 R_1^{-\alpha -2} e^{\alpha R_1^{\ve}}||  |\nabla_{H}u| ||_{R_1,2R_1} + R_2 R_2^{-\alpha -2} e^{\alpha R_2^{\ve}} ||  |\nabla_{H}u| ||_{R_2,2R_2} \right).
	\notag
\end{align}
From the Caccioppoli estimate in Lemma \ref{energy}, we have
\begin{equation}\label{bd5}
	\begin{cases}
		R_1  ||  |\nabla_{H}u| ||_{R_1,2R_1} \leq C(1+K^{1/2}) ||u\psi^{1/2}||_{4R_1},
		\\
		R_2|| |\nabla_{H}u| ||_{R_2,2R_2}   \leq C( 1+ K^{1/2})  || u \psi^{1/2}||_{R_0}.
	\end{cases}
\end{equation}
We now use \eqref{bd5} in \eqref{bd31} and with possibly some large universal constant $C,$ get
\begin{align}\label{bd3}
	\A^{3/2}R_2^{-\alpha -2+\ve/2} e^{\alpha R_2^{\ve}}||  u \psi^{1/2}||_{2R_1,R_2}&\leq C \big( R_1^{-\alpha -2} e^{\alpha R_1^{\ve}}||  u \psi^{1/2}||_{R_1,2R_1} + R_2^{-\alpha -2} e^{\alpha R_2^{\ve}} ||  u \psi^{1/2}||_{R_2,2R_2}
	\\
	&\;\;\; +  R_1^{-\alpha -2} e^{\alpha R_1^{\ve}}(1+K^{1/2}) ||u\psi^{1/2}||_{4R_1} + R_2^{-\alpha -2} e^{\alpha R_2^{\ve}}( 1+ K^{1/2})  || u \psi^{1/2}||_{R_0}\big).
	\notag
\end{align}
At this point, we can repeat the arguments as in the proof of Theorem 1.3 in \cite{BGM} to get to the desired conclusion.		
\end{proof}

\end{document}